\documentclass{amsart}
\usepackage{epsfig, amsmath, amssymb, latexsym,  amscd}
\usepackage{amsthm}
\pagestyle{headings}
%\magnification=\magstep1
\newcommand{ \pp }{{\mathbb P}}
\newcommand{ \cc }{{\mathbb C}}
\newcommand{ \II }{{\mathbb I}}
\newcommand{ \ZZ }{{\mathbb Z}}
\newcommand{ \ii }{{\mathcal I}}
\newcommand{ \oo }{{\mathcal O}}

\newtheorem{thm}{Theorem}[section]
 \newtheorem{cor}[thm]{Corollary}
 \newtheorem{lem}[thm]{Lemma}
 \newtheorem{prop}[thm]{Proposition}
\newtheorem{prob}[thm]{Problem} 
 \theoremstyle{definition}
 \newtheorem{defn}[thm]{Definition}
 \theoremstyle{remark}
 \newtheorem{rem}[thm]{Remark}
  \newtheorem{ex}[thm]{Example}

\begin{document}

\title{Determinantal representation  and subschemes of general plane curves}

\author[L. Chiantini]{Luca Chiantini}
\address[L. Chiantini]{Dipartimento di Scienze Matematiche e Informatiche \\ Universit\`{a} di Siena \\ Pian dei Mantellini 44 \\ 53100 Siena,  Italia}
\email{chiantini@unisi.it}
\urladdr{\tt{http://www.mat.unisi.it/newsito/docente.php?id=4}}

\author[J. Migliore]{Juan Migliore}
\address[J. Migliore]{Department of Mathematics\\ University of Notre Dame\\  Notre Dame, IN, 46556, USA}
\email{migliore.1@nd.edu}
\urladdr{\tt{www.nd.edu/~jmiglior}}

%\date{24 August 2010}
\footnote{This paper was written while the second author was sponsored by the 
National Security Agency under Grant Number H98230-09-1-0031.}

\begin{abstract} 
Let $M = (m_{ij})$ be an $n \times n$ square matrix of integers.  For our purposes, we can assume without loss of generality that $M$ is homogeneous and that the entries are non-increasing going leftward and downward.  Let $d$ be the sum of the entries on either diagonal.  We give a complete characterization of which such matrices have the property that a general form of degree $d$ in $\cc[x_0,x_1,x_2]$ can be written as the determinant of a matrix of forms $(f_{ij})$ with $\deg f_{ij} = m_{ij}$ (of course $f_{ij} = 0$ if $m_{ij} < 0$).  As a consequence, we answer the related question of which $(n-1) \times n$ matrices $Q$ of integers have the property that a general plane curve of degree $d$ contains a zero-dimensional subscheme whose degree Hilbert-Burch matrix is $Q$.  This leads to an algorithmic method to determine properties of linear series contained in general plane curves.

\end{abstract}

\subjclass{Primary: 14M12, 14H50, 13C40; Secondary: 14H51, 14M06, 14M07, 14M05, 14J70}

 \maketitle

\thispagestyle{empty}

\section{Introduction}

The possibility of representing a general homogeneous polynomial of degree $d$
in a polynomial ring $\cc[x_0,\dots,x_r]$ as a determinant of a matrix of polynomials
of lower degree, has been studied in connection with its application to
several theories in Algebra, Analysis and Geometry.
For many applications, indeed, the attention is restricted
to matrices $N$ of linear forms. In this respect, the problem is
essentially well understood (see  e.g. \cite{V} and \cite{beauv}, 
for the state of art of the theory).

The problem, however, makes sense even if we allow $N$ to be a more
general square matrix of forms (= homogeneous polynomials), except that,
as we want the determinant to be homogeneous, some hypothesis on the
degrees of the entries of $N$ is necessary. 

So,  we fix the size and the degrees of the entries of $N$, i.e.
we fix the {\it degree matrix} $M$ of $N$, and assume that $M$ is {\it homogeneous}
(see the definition in the next section).
Our problem is to determine whether or not a general  form of 
degree $d$ in  $\cc[x_0,\dots,x_r]$ can be realized as the determinant
of a matrix of forms $N=(f_{ij})$, with $\deg(f_{ij})=m_{ij}$, for any
given, homogeneous matrix of integers $M=(m_{ij})$ of degree $d$.
 
As it happens for matrices of linear forms, as explained e.g. in \cite{beauv}, 
the answer to the previous question
is negative when $r>2$, except for
$r=3, d\leq 3$. This is essentially a consequence of the 
Noether--Lefschetz principle: a hypersurface $F$ has a determinantal 
 equation if and only if it contains an arithmetically Cohen--Macaulay 
 divisor (that is not a complete intersection on $F$, except for trivial
matrices).
The reasons for this connection follows from the resolution of 
Cohen--Macaulay subschemes of codimension 2 in projective spaces,
and the Hilbert-Burch theorem. Details can be read in \cite{CGO} or \cite{E1}.

So, we will restrict ourselves to $r=2$, i.e. to the case
of plane curves. 

In this situation,  it is classically known (see e.g. \cite{Dickson}, 
or \cite{V} for a modern account of the theory)
 that any form of degree
$d$ is the determinant of a $d\times d$ matrix of linear forms.
Representations of equations of plane curves as determinants
of other types are present in the literature.
Beauville essentially proves (see \cite{beauv}, Proposition 3.5)
that when the entries of $M$ all belong to $\{1,2\}$, 
then the representation is possible, for a general form. 
Another particular case, related with Petri's theory of special 
linear series, is treated in \cite{AC}. 

We want to complete the picture, and answer the following problem:

\begin{prob}\label{first} Given a general homogeneous form $F$ of degree $d$
in $\cc[x_0,x_1,x_2]$ (representing a general plane curve), and a 
homogeneous matrix of integers, $M$,
of degree $d$, can we find a matrix $N$ of forms, whose degree matrix
is $M$, and such that $det(N)=F$?
\end{prob} 

The solution of the previous problem will also give a criterion for
deciding which 0-dimensional subschemes (i.e. sets of points)
one finds on a general plane curve of degree $d$. 

A solution of the $2\times 2$ case comes out,  as a by-product,
from the main result contained in \cite{CaChGe}. It turns out
that, for any choice of a homogeneous, degree $d$,
 $2\times 2$ matrix $M$ of {\it non-negative} integers, the general 
 form of degree $d$ in $\cc[x_0,x_1,x_2]$ is the determinant
 of a matrix of forms whose degree matrix is $M$.
 
Using the result of \cite{CaChGe} as the initial step of our induction,
we will show that the answer to the problem is positive  
if and only if $M$ satisfies mild natural combinatorial conditions
explained below (see Theorem \ref{main}).

Namely, as we point out in Examples \ref{exdiag} and \ref{exsubdiag}
below, it is easy to see that if an ordered matrix $M$ has negative entries in the 
main diagonal, then a general form cannot be the determinant of a matrix 
whose degree matrix is $M$.
The same happens when $M$ has negative entries in the subdiagonal,
except for trivial cases.

We will see, in the main theorem, that if one excludes the two previous,
obviously negative cases,
then a general form of degree $d$ is always the determinant of a matrix,
whose degree matrix is a pre-assigned $M$.

As a consequence of our result, we give a procedure to determine whether
or not sets of points with a prescribed degree Hilbert-Burch matrix (or 
with a prescribed Hilbert function) are contained in a general curve of given degree.
Via the adjunction process, this last result can be used to 
determine properties of linear series contained on general plane curves,
as explained in the last section of the paper. 

The paper is organized as follows. In section 2, we give our starting definition
of homogeneous matrices and ordering. In section 3, we point out the relation
between determinantal representations of plane curves
and $0$-dimensional subschemes. In section 4, we prove a lemma
on families of $0$-dimensional schemes, which provides the main tool
for our induction. Section 5 is devoted to the proof of the main theorem. 
In section 6, we show how the main result can be used to detect the
existence, on general plane curves, of divisors with prescribed invariants
and linear series with prescribed properties. The authors wish to thank 
the referee for pointing out an incorrect statement in a previous 
version of this section.

\section{Foundations}

\begin{defn} 
A $2\times 2 $ matrix of integers
$$ M = \begin{pmatrix} a& b \\ c & e\end{pmatrix}$$
is {\it homogeneous} if $a+e = b+c$.\par 
A  matrix $M=(m_{ij})$ of integers is 
$\it homogeneous$ if all its $2\times 2$ submatrices are.\par 
It turns out that a square $n\times n$ matrix $M$ is homogeneous if,
for any permutation $\sigma$ in the set $\{1,\dots,n\}$, the number
$$  d = m_{1\sigma(1)}+ m_{2\sigma(2)}+\dots + m_{n\sigma(n)}$$
is constant. Indeed, any permutation can be obtained as a product of a series
of transpositions.  
The number $d$ is also called  the {\it degree} of the matrix.
\end{defn}

It is clear that $M$ is homogeneous if and only if all its 
submatrices are.

\begin{rem}
Let $M=(m_{ij})$ be a $n\times n$  matrix of integers. For any $i,j$
consider a homogeneous form $f_{ij}$ of degree $m_{ij}$ in
the polynomial ring $\cc[x_0,\dots,x_r]$. 
Of course, if $m_{ij}$ is negative then the form $f_{ij}$
must necessarily be $0$. 

If $M$ is homogeneous,
of degree $d$, then the determinant of the matrix $(f_{ij})$ 
is a homogeneous form of degree $d$.\par 
\end{rem}

For a given matrix of integers, let us fix 
a standard ordering. Notice indeed that any permutation of
rows and columns only changes the sign of the determinant,
thus is irrelevant for our problem.

\begin{defn} We say that a matrix of integers $M=(m_{ij})$
is {\it well-ordered} if:
$$ \mbox{ for } i'>i \mbox{ and } j'>j, \mbox{ we have } m_{i'j}\leq m_{ij}
\mbox{ and } m_{ij'}\geq m_{ij}. $$
Roughly speaking, the matrix is non-increasing going leftward
and downward. The maximal element is $m_{1n}$ while the minimal
is $m_{n1}$.
\end{defn}

The following examples point out two natural conditions on the matrix $M$, which
exclude that a general form of degree $d$ is the determinant of a matrix whose degree
matrix is $M$.

\begin{ex}\label{exdiag} Let $M=(m_{ij})$ be a well-ordered, $n\times n$ homogeneous matrix
 of integers of degree $d$. Assume that for  some $k=1,\dots n$, the element 
 $m_{kk}$, in the main diagonal, is negative. Then a general form of degree $d$ in $\cc[x_0,x_1,x_2]$
 is not the determinant of a matrix of forms $N$, whose degree matrix is $M$.
 
 Indeed, the ordering implies that $m_{ij}< 0$ for $i\geq k$ and $j\leq k$. 
 Thus, in the matrix of forms $N=(f_{ij})$, we have $f_{ij}=0$ when
  $i\geq k$ and $j\leq k$. Then the determinant of $N$ is $0$. 
 \end{ex}

 \begin{ex}\label{exsubdiag} Let $M=(m_{ij})$ be a well-ordered, 
 $n\times n$ homogeneous matrix
 of integers of degree $d$. The elements of type $m_{k\ k-1}$,  $k=2,\dots n$, 
 form the so-called {\it sub-diagonal}.
 
 Assume that for  some $k=2,\dots n$, the element 
 $m_{k\ k-1}$ is negative. Then a general form of degree $d$ in $\cc[x_0,x_1,x_2]$
 is not the determinant of a matrix of forms $N$, whose degree matrix is $M$,
 unless the submatrix $M'$ of $M$ obtained by erasing the first $k-1$ rows and columns, 
 has either degree $0$ or degree $d$.
 
 The reason is clear. If for some $k$ we have $m_{k\ k-1}<0$, then necessarily,
 in the matrix of forms $N=(f_{ij})$, we have $f_{ij}=0$ 
 when $i\geq k$ and $j\leq k-1$. Call $N'$ the $(k-1)\times (k-1)$) submatrix of $N$ 
 formed by the first $k-1$ rows and columns and call $N''$ 
 the $(n-k+1)\times(n-k+1)$ matrix obtained from $N$ by erasing these first $k-1$ 
 rows and columns. We get $\det(N)=\det(N')\det(N'')$ and
 $\deg(\det(N''))=d-\deg(\det(N'))$. If $d>\deg(\det(N'))=\deg(M')>0$, 
 the conclusion follows, since the general form of degree $d$ is irreducible.
 \end{ex}

\section{Determinants and subschemes}\label{detsub}

The existence of a determinantal representation for a 
form $F\in \cc[x_0,x_1,x_2]$ is strictly connected with the existence of some sets
of points on the curve $C\subset\pp^2$, associated with $F$.

The reasons for this connection follow from the resolution of zero-dimensional
schemes on the plane, and the Hilbert-Burch theorem. 

Our main idea is to prove the existence of the mentioned sets of points, 
on a general plane curve, using the liaison process and its effects
on the Hilbert-Burch matrix, a classical method introduced years ago by F. Gaeta
(see \cite{G}).

We briefly outline in this section the main features of the connection between
matricial representations and subsets; see \cite{E2} pp.\ 501--503 for details.  
We begin in the more general setting of codimension two arithmetically 
Cohen-Macaulay subschemes of $\mathbb P^r$, and then explain why we restrict to 
zero-dimensional subschemes of $\mathbb P^2$.

\smallskip

Let $Z\subset \pp^r$ be an arithmetically Cohen-Macaulay scheme
of codimension $2$. Call $\oo$ the structure sheaf of $\pp^r$. The ideal sheaf 
$\ii_Z$ has a free resolution of type:
$$ 
0\to \oplus^{n-1} \oo(-b_j)\stackrel{A}\longrightarrow\oplus^n \oo(-a_i)\to \ii_Z\to 0,
$$
where $A$ is given by a $(n-1)\times n$ matrix of forms, the {\it Hilbert-Burch}
matrix of $Z$. The maximal minors of $A$ are forms of degrees $a_1,\dots, a_n$,
which  generate the homogeneous ideal of $Z$.

If $A=(f_{ij})$ and $\deg(f_{ij})=m_{ij}$, the matrix $D=(m_{ij})$ is thus a
homogeneous matrix of integers, whose minors have degrees $a_1,\dots, a_n$.
It is called a {\it degree Hilbert-Burch} matrix (dHB for short) of $Z$.

In this picture, arranging the numbers so that $a_1\geq\dots\geq a_n$
and $b_1\geq\dots\geq b_{n-1}$, one has $m_{ij}=b_i-a_j$ and the matrix
$D$ is well-ordered.

Notice that we do {\it not} assume that the resolution is minimal. 
Hence the matrix $A$ may have some entries which are non-zero constants
or, equivalently, the map described by $A$ can induce an isomorphism
on some factors $\oo(-b_j)\to \oo (-a_i)$, with $b_j=a_i$, i.e. $m_{ij}=0$.

In this sense, the numbers $a_i,b_j$ are not uniquely determined by $Z$,
since it is always possible to add redundant factors in the resolution.
\smallskip

Conversely, let $D=(m_{ij})$ be a $(n-1)\times n$ well-ordered,
homogeneous matrix of integers and let $A$ be a matrix of forms,
in $\cc[x_0,\dots, x_r]$, whose degree matrix is $D$.

Let $a_j$ be the degree of the minor obtained by erasing the $i$-th
column and take $b_i=a_j+m_{ij}$. 

The matrix $A$ determines a map of sheaves 
$$ \oplus^{n-1} \oo(-b_j)\stackrel{A}\longrightarrow\oplus^n \oo(-a_i).$$
When the map injects, and drops rank in codimension $2$, then the
cokernel is the ideal sheaf of an arithmetically Cohen-Macaulay subscheme
of codimension $2$, whose homogeneous ideal is generated by the maximal 
minors of $A$.

\begin{rem} Assume that, for some $k$, $m_{kk}<0$. Then the ordering implies that
$m_{ij}<0$ for $i\geq k$ and $j\leq k$. Thus, in the matrix $A$, we have 
$f_{ij}=0$ for $i\geq k$ and $j\leq k$. Hence the maximal minors of $A$ are
either $0$, or they contain the common factor $\det(A')$, where $A'$
is the square matrix obtained by deleting the first $(k-1)$ rows 
and the first $k$ columns of $A$.

It follows that the map defined by $A$ drops rank in the locus defined by
$\det(A')=0$, which has codimension at most $1$, unless $A'$ has degree $0$.
\end{rem}

\begin{rem} Assume $m_{kk}=0$ for all $k=1,\dots, n$. Then $a_n=0$.
Thus, for a general choice of the forms $f_{ij}$, the determinant
of the last minor of $A$ is a non-zero constant, and thus the map
drops rank nowhere.

From some point of view, this can be considered as a degenerate case of the
general situation, in which the locus where the matrix $A$ drops rank has degree $0$
and is empty (so its {\it true} dimension is $-1$). 
\end{rem}

Excluding the previous two cases, i.e. when $m_{kk}\geq 0$ for all $k$ and
$\max\{m_{kk}\} >0$, then for a general choice of the forms $f_{ij}$
of degrees $m_{ij}$, the resulting matrix $A$ determines a map which is injective,
and drops rank in codimension $2$. Thus $A$ is the Hilbert-Burch matrix
of an arithmetically Cohen-Macaulay subscheme of codimension $2$
(see \cite{CGO}, Remark at the end of 0.4, or see \cite{G}).

The construction yields some consequences:

\begin{prop}\label{matrixpoints} Let $M$ be a $n\times n$ homogeneous matrix of integers.

If a general hypersurface of degree $d$ in $\pp^r$ contains an
arithmetically Cohen-Macaulay scheme $Z$ of codimension $2$, with a dHB
matrix that corresponds to the submatrix of $M$ obtained by erasing one 
row of $M$, then the a general
form of degree $d$ in $\cc[x_0,\dots, x_r]$ is the determinant
of a matrix of forms, whose degree matrix is $M$.

Conversely, assume that a general form of degree $d$ is the determinant
of a matrix of forms, $N$, whose degree matrix is $M$. Let $R=(r_{ij})$ be a matrix
obtained by erasing one row of $M$. Assume $r_{kk}\geq 0$ for all
$k=1,\dots, n-1$ and assume $\max\{r_{kk}\}>0$. Then the hypersurface
defined by $N$ contains an arithmetically Cohen-Macaulay subscheme
of codimension $2$, with a dHB matrix equal to $R$.
\end{prop}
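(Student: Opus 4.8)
The plan is to exploit the cofactor (Laplace) expansion along a single row, which is the algebraic bridge between an $n\times n$ determinant and the $(n-1)\times n$ Hilbert-Burch matrix of a codimension two subscheme. I treat the two implications separately, but both rest on the same bordering construction together with the Hilbert-Burch theorem recalled at the start of this section.

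For the first implication, let $F$ be a general form of degree $d$, and by hypothesis let $Z \subset \{F=0\}$ be an arithmetically Cohen-Macaulay scheme of codimension two whose Hilbert-Burch matrix $A$ has degree matrix $R$, the submatrix of $M$ obtained by deleting a row, say the $e$-th. By Hilbert-Burch, the maximal minors $\Delta_1,\dots,\Delta_n$ of $A$, where $\Delta_j$ is the signed minor omitting the $j$-th column and has degree $a_j$, generate the homogeneous ideal $I_Z$. Since $F$ vanishes on $Z$ we have $F \in (I_Z)_d$, so we may write $F = \sum_j g_j \Delta_j$ with $\deg g_j = d - a_j$. First I would carry out the degree bookkeeping to check that these coefficient degrees are exactly the entries of the deleted row of $M$: writing $m_{ij} = b_i - a_j$, the consistency of the resolution of $Z$ forces $\sum_{i\neq e} b_i = \sum_j a_j$, whence the degree of $M$, namely $d$, equals $b_e$; thus $d - a_j = b_e - a_j = m_{ej}$, as required, with $g_j = 0$ precisely when $m_{ej} < 0$. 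Bordering $A$ with the row $(g_1,\dots,g_n)$ in position $e$, with the signs dictated by the expansion, produces an $n\times n$ matrix $N$ of degree matrix $M$ and $\det N = \pm F$; negating one row removes the sign ambiguity, so the general $F$ is a determinant of the prescribed type.

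For the converse I would read the same expansion in the opposite direction, the new ingredient being a genericity argument. Let $V = \prod_{ij} H^0(\oo(m_{ij}))$ be the space of matrices of forms with degree matrix $M$; the hypothesis says the map $\det\colon V \to H^0(\oo(d))$ is dominant, so its image contains a dense open set and a general $N\in V$ maps to a general form $F$. Delete the row so that the resulting $(n-1)\times n$ submatrix $A$ has degree matrix $R$. Because $r_{kk}\geq 0$ for all $k$ and $\max_k r_{kk} > 0$, the two degenerate possibilities isolated in the preceding remarks, rank drop in codimension one or an empty locus, are excluded, so by the discussion opening this section a general such $A$ injects and drops rank exactly in codimension two; it is therefore the Hilbert-Burch matrix of an arithmetically Cohen-Macaulay codimension two subscheme $Z$ with dHB matrix $R$. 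Both ``$\det N$ lies in the dense open image'' and ``$A$ is a genuine Hilbert-Burch matrix'' are dense open conditions on $V$, so a general $N$ satisfies both, and expanding $\det N$ along the deleted row gives $F = \det N = \sum_j \pm g_j \Delta_j \in I_Z$, whence $Z \subset \{F = 0\}$.

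The routine part is the Laplace expansion and the linear bookkeeping of degrees; the step that requires real care, and the main obstacle, is the converse, where I must guarantee that the deleted-row submatrix $A$ of a general representing matrix actually drops rank in codimension two rather than in codimension one, which would force the minors to share a common factor, or nowhere, the degenerate $a_n = 0$ case. This is exactly why the hypotheses $r_{kk}\geq 0$ and $\max_k r_{kk} > 0$ are imposed, and why it is essential to pass to a general $N$ in the dominant family rather than to an arbitrary matrix realizing a prescribed $F$.
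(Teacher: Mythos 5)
Your proof is correct and takes essentially the same approach the paper intends: the paper states this proposition without a separate proof, as a direct consequence of the Hilbert--Burch/bordering construction and the genericity discussion at the start of Section \ref{detsub}, and your Laplace-expansion argument together with the dominance-plus-dense-open-conditions argument is exactly that construction made explicit. In particular, your degree bookkeeping (showing $d=b_e$, hence $\deg g_j = d-a_j = m_{ej}$) and your use of the hypotheses $r_{kk}\geq 0$ and $\max_k r_{kk}>0$ to rule out a codimension-one or empty rank-drop locus are precisely the points the paper's construction relies on.
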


\begin{rem} If we drop the assumption `$r_{kk}\geq 0$ for all
$k=1,\dots, n-1$', the converse of the previous statement may fail.
E.g. consider the matrix:
$$ M= \begin{pmatrix}
0 &1 &10 &11 \\ -1 &0 &9 &10 \\ -5 &-4 &5 &6 \\ -8 &-7& 2& 3
\end{pmatrix}. $$

It follows from our main theorem below, that a general form of degree $8$
in three variables is the determinant of a matrix 
whose degree matrix is $M$. Nevertheless,
by erasing the first row, we do not find the dHB matrix
of a codimension two subscheme.

This shows that it can sometimes happen that $r_{kk} < 0$.  
Indeed, if $R$ is obtained from $M$ by erasing the $q$-th row, 
then $r_{kk} = m_{kk}$ for $k < q$, while $r_{kk} = m_{k+1 \ k}$ 
for $k \geq q$.  Thus, if a general form of degree $d$ is the 
determinant of a matrix of forms whose degree matrix is $M$, 
then $r_{kk} < 0$ can only happen for $k \geq q$ (by Example \ref{exdiag}).  
Moreover, in this case, $M$ must have the shape described in 
Example \ref{exsubdiag}, which we will consider apart, 
in Remark \ref{rem1}. 
 
Notice also that if we drop the assumption $\max\{r_{kk}\}>0$
in the previous proposition, then the argument also works,
with the only exception that the subscheme $Z$ could be empty!
\end{rem}

As in $\pp^2$ every subscheme of codimension $2$
is arithmetically Cohen-Macaulay, a problem very similar to the one 
stated in the previous section is the following:

 \begin{prob}\label{second} 
Does a general curve of degree $d$ contain a zero-dimensional subset $Z$,
whose dHB degree matrix is a preassigned $(n-1)\times n$ 
homogeneous matrix $D$ of integers?
\end{prob} 

The starting point of our analysis is the following theorem,
which settles the case in which $M$ is a $2\times 2$ matrix.
In this case, $Z$ has a homogeneous ideal generated by
$2$ forms, i.e. it is a complete intersection. 

\begin{thm}\label{2x2} Let $M=(m_{ij})$ be a homogeneous, ordered, $2\times 2$ matrix
of integers. Call $d$ the degree of $M$.

A general form $F$ of degree $d>0$ in $\cc[x_0,x_1,x_2]$ is the determinant
of a matrix of forms whose degree matrix is $M$, if and only if either:\par 
- $m_{11}=0,d $; or \par
- $m_{21}\geq 0$. 
\end{thm}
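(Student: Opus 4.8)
The plan is to reduce the statement to a single nontrivial input, the non-negative $2\times 2$ case of \cite{CaChGe}, by isolating the effect of a negative subdiagonal entry. The key elementary observation is that, since $M$ is ordered, $m_{21}$ is the minimal entry; hence as soon as $m_{21}<0$ the form $f_{21}$ is forced to vanish, and the determinant of \emph{any} matrix $N$ with degree matrix $M$ collapses to the product
$$ \det N = f_{11}f_{22}, \qquad m_{11}+m_{22}=d. $$
Everything then turns on whether a general form of degree $d$ can, or cannot, be written as such a product. I would organize the argument by the sign of $m_{21}$, and, when $m_{21}<0$, by the value of $m_{11}$.

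For the sufficiency (``if'') direction the two stated conditions are handled separately. If $m_{21}\geq 0$, then by the ordering all four entries are non-negative, and the assertion is precisely the theorem of \cite{CaChGe} quoted in the introduction. If instead $m_{11}=0$ (so that $m_{22}=d$), I would simply set $f_{11}=1$, $f_{21}=0$, and $f_{22}=F$, so that $\det N=F$ for an arbitrary form $F$ of degree $d$; the case $m_{11}=d$ (so $m_{22}=0$) is symmetric, placing the constant in the $(2,2)$ position and an arbitrary $F$ in the $(1,1)$ position. Together these cover exactly the matrices permitted by the theorem, and only the case of a genuinely positive split $0<m_{11}<d$ actually requires \cite{CaChGe}.

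For the necessity (``only if'') direction I would assume $m_{21}<0$ and $m_{11}\notin\{0,d\}$ and show that no general $F$ arises. By the opening observation $\det N=f_{11}f_{22}$. If $m_{11}<0$ then $f_{11}=0$ and the whole first column vanishes, as in Example \ref{exdiag}, so $\det N\equiv 0$; symmetrically $m_{22}<0$ forces the second row to vanish and again $\det N\equiv 0$. In the remaining range one has $m_{11}\geq 0$ and $m_{22}\geq 0$, and then $m_{11}\neq 0,d$ gives $0<m_{11}<d$, so $m_{22}=d-m_{11}$ also satisfies $0<m_{22}<d$; both factors have positive degree and $\det N$ is reducible (the $2\times 2$ instance of Example \ref{exsubdiag}). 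Since here $d=m_{11}+m_{22}\geq 2$, a general plane form of degree $d$ is irreducible and nonzero, hence cannot equal $f_{11}f_{22}$. This yields the contradiction and completes the necessity.

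The only genuinely hard ingredient is the black box \cite{CaChGe} for the all-non-negative case; the rest is the elementary bookkeeping above, together with the standard fact that a general plane form of degree $\geq 2$ is irreducible. I would expect no real obstacle beyond stating the reductions cleanly, the one point needing care being the verification that the ordering hypothesis does force $f_{21}=0$ (and, in the degenerate subcases, an entire row or column), so that the factorization $\det N=f_{11}f_{22}$ is legitimate and holds for every admissible choice of $N$.
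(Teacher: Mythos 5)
Your proposal is correct and takes essentially the same approach as the paper: necessity via the vanishing/factorization arguments of Examples \ref{exdiag} and \ref{exsubdiag} (negative diagonal entry forces $\det N \equiv 0$; negative subdiagonal entry forces $\det N = f_{11}f_{22}$, contradicting irreducibility of a general form), and sufficiency via the trivial construction for $m_{11}=0,d$ together with \cite{CaChGe} for the remaining case. The only cosmetic difference is that the paper invokes \cite{CaChGe} in its complete-intersection form (a general curve of degree $d$ contains a complete intersection of type $(m_{11},m_{12})$) and converts this to a determinantal representation via Proposition \ref{matrixpoints}, whereas you cite the determinantal by-product of \cite{CaChGe} directly as a black box.
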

\begin{proof} The conditions are necessary, as explained in Examples \ref{exdiag} and
\ref{exsubdiag}. 

For the converse, notice that the first case is trivial.
Thus, assume $m_{11}\neq 0,d$ and $m_{12}\geq 0$. 
Notice that $m_{11},m_{22}\geq m_{21}$,
hence $d\geq m_{11}>0$. Also $m_{12}=d-m_{21}$ hence $d\geq m_{12}\geq m_{11}>0$.

The main theorem of \cite{CaChGe} shows that a general curve of degree $d$
contains the complete intersection $Z$ of curves of degree $m_{11}, m_{12}$.
Since a dHB matrix of $Z$ is $(m_{11}\ m_{12})$, the claim follows by
Proposition \ref{matrixpoints}.
\end{proof}

We end this section by stressing an important, although trivial, remark.

Starting with a matrix of forms $N$, and erasing
{\it different} rows, we get a priori different $(n-1)\times n$ degree matrices.
Thus, there are a priori zero-dimensional schemes whose resolutions have
rather different numerical invariants, whose existence on a general
 curve of degree $d$ implies that a general form is the determinant
 of a matrix, with degree matrix $M$.
 
We will use this observation several times, when constructing
our inductive argument about the representation of forms as determinants.

\section{An incidence variety}\label{inc}

Let $T$ be an irreducible subvariety of the Hilbert scheme of points
in $\pp^2$, such that the dHB matrix is constant along $T$.
Then also the Hilbert function $Hf$ is constant along $T$.
Call $\delta$ the degree of elements in $T$.

Consider the incidence variety:
$$\II =\II(d) :=\{(C,Z): C \mbox{ is a curve of degree $d$ containing }Z\in T \}$$
with the two projections $p=p(d):\II\to T$ and $q=q(d):\II\to \pp(H^0O(d))$.
We want to study conditions under which $q(d)$ dominates $\pp(H^0\oo(d))$, which
amounts to saying that a general curve of degree $d$ contains a set of points
in $T$.

For the application to our problem on
the determinantal representation of plane curves, it would
be sufficient to consider the case in which $T$ is the whole stratum
of the Hilbert scheme of points with fixed dHB matrix
(which is irreducible, since it is dominated
by a product of projective spaces).
Since the result we are going to use is indeed general,
we will  maintain the generality of $T$, throughout this section.

By construction, the fiber of $p(d)$ at $Z\in T$ is $\pp(H^0\ii_Z(d))$,
$\ii_Z$ being the ideal sheaf of $Z$. 
It follows that $\II$ is irreducible. Moreover:
$$\dim \II(d) = \dim(T)+h^0\ii_Z(d)-1 = \dim(T)+h^0\oo(d)-Hf(d)-1.$$
Since $\pp(H^0\oo(d))$ has dimension $h^0\oo(d)-1$, we get immediately:

\begin{prop} If $\dim(T)< Hf(d)$, then $q(d)$ cannot be dominant.
\end{prop}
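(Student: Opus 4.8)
The plan is to read the conclusion directly off the dimension formula for $\II(d)$ that has already been established, combined with the elementary fact that a morphism of irreducible varieties cannot dominate a target of strictly larger dimension. Concretely, I would recall that $q(d)(\II(d))$ is an irreducible constructible subset of $\pp(H^0\oo(d))$, so that the dimension of its closure is at most $\dim \II(d)$. Consequently, once we verify the strict inequality $\dim \II(d) < \dim \pp(H^0\oo(d))$, the image of $q(d)$ cannot be dense, and hence $q(d)$ fails to be dominant.

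The only input that needs to be supplied is the substitution of the hypothesis into the already-derived formula
$$\dim \II(d) = \dim(T) + h^0\oo(d) - Hf(d) - 1.$$
Assuming $\dim(T) < Hf(d)$, equivalently $\dim(T) - Hf(d) \leq -1$, I would conclude
$$\dim \II(d) \leq h^0\oo(d) - 2 < h^0\oo(d) - 1 = \dim \pp(H^0\oo(d)),$$
which is precisely the strict inequality called for above.

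I do not expect any genuine obstacle here. All of the substantive input, namely the irreducibility of $\II$ and the computation of its dimension through the fibers $\pp(H^0\ii_Z(d))$ of the projection $p(d)$, has already been carried out in the discussion preceding the statement, and it rests only on the constancy of the Hilbert function along $T$ (guaranteed by the constancy of the dHB matrix). The proposition is therefore a pure dimension count, and the phrase ``we get immediately'' reflects that its entire content is the displayed inequality together with the standard bound on the dimension of the image of a morphism.
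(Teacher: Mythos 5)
Your proposal is correct and matches the paper's (essentially implicit) argument: the paper derives the formula $\dim \II(d) = \dim(T)+h^0\oo(d)-Hf(d)-1$, notes $\dim \pp(H^0\oo(d)) = h^0\oo(d)-1$, and states the proposition as an immediate consequence, which is exactly the dimension comparison you spell out. Your only addition is making explicit the standard fact that a morphism of irreducible varieties cannot dominate a target of strictly larger dimension, which is precisely what the paper's ``we get immediately'' is invoking.
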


The fundamental remark is the following result (see e.g. \cite{ChFa}, Lemma 3.2).

\begin{thm}\label{split}
Assume that, for $Z\in T$, the Hilbert function $Hf(d)$ coincides
with the degree $\delta$. Assume that $q(d)$ is dominant.
Then for all $d'\geq d$, also $q(d')$ is dominant.
\end{thm}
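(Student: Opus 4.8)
The plan is to reduce immediately to the one-step statement ``dominance at $d$ implies dominance at $d+1$'' and then iterate, checking that the hypothesis propagates. Since $Hf$ is non-decreasing in the degree and bounded above by $\delta=\deg Z$, the assumption $Hf(d)=\delta$ forces $Hf(d')=\delta$ for every $d'\geq d$; equivalently, from $0\to\ii_Z(d')\to\oo(d')\to\oo_Z\to 0$ one reads $h^1(\ii_Z(d'))=0$, so the restriction $H^0(\oo(d'))\to H^0(\oo_Z)$ is onto and the linear system $\pp(H^0\ii_Z(d'))$ has the expected dimension $h^0\oo(d')-\delta-1$. Thus at every stage the formula $\dim\II(d')=\dim(T)+h^0\oo(d')-\delta-1$ of the previous computation applies, and it suffices to treat the passage from $d$ to $d+1$.

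Next I would translate dominance into a fibre count for $q=q(d+1)$. Because $\II(d+1)$ is irreducible, $\dim\overline{q(\II(d+1))}=\dim\II(d+1)-e$, where $e$ is the dimension of the generic fibre; using $Hf(d+1)=\delta$ this reads $\dim\overline{q(\II(d+1))}=h^0\oo(d+1)-1-(e-(\dim T-\delta))$. Hence $q(d+1)$ is dominant if and only if $e=\dim T-\delta$, and the inequality $e\geq\dim T-\delta$ is automatic since the image sits inside $\pp(H^0\oo(d+1))$. So everything reduces to the upper bound $e\leq\dim T-\delta$, which by upper semicontinuity of fibre dimension is implied by exhibiting a \emph{single} curve in the image of $q(d+1)$ whose fibre has dimension $\leq\dim T-\delta$.

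For that single curve I would take a reducible specialization: let $C$ be a general curve of degree $d$ and $\ell$ a general line in $\pp(H^0\oo(1))$, and set $C'=C\cup\ell$. Since $q(d)$ is dominant, $C$ contains some $Z\in T$, so $C'\in q(d+1)(\II(d+1))$; moreover, as the generic fibre of the dominant map $q(d)$ has dimension exactly $\dim\II(d)-(h^0\oo(d)-1)=\dim T-\delta$, the general such $C$ satisfies $\dim\{Z\in T:Z\subset C\}=\dim T-\delta$. It then remains to prove that adding the general line does not enlarge the fibre, i.e.
$$\dim\{Z\in T:Z\subset C\cup\ell\}\ \leq\ \dim T-\delta.$$
I would attack this through the auxiliary incidence $\Sigma=\{(Z,\ell):Z\in T,\ Z\subset C\cup\ell\}\subset T\times\pp(H^0\oo(1))$, projecting to the space of lines: the fibre over a general $\ell$ is exactly the set above, while projecting to $T$ shows the fibre over $Z$ consists of the lines through the subscheme $Z\setminus(Z\cap C)$, which is positive-dimensional only when almost all of $Z$ already lies on $C$. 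Stratifying by $k=\deg\big(Z\setminus(Z\cap C)\big)$, the estimate reduces to showing that forcing $\delta-k$ of the points of a member of $T$ onto the general curve $C$ drops the dimension of the corresponding subfamily by at least $\delta-k$.

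The main obstacle is precisely this last ``general position of $C$ relative to $T$'' estimate: for an arbitrary irreducible family $T$ (fixed dHB, hence fixed Hilbert function) one cannot move the points of $Z$ freely, so the bound on $\dim\{Z\in T:\deg(Z\setminus(Z\cap C))=k\}$ must be extracted from the genericity of $C$ together with the constancy of $Hf$ along $T$ and the vanishing $h^1(\ii_Z(d))=0$ (which is what guarantees that the general $C$ cuts out independent conditions on the members of $T$). The balanced count in the model case $T=$ (open locus of) $\delta$ general points, where every stratum contributes exactly $\dim T-\delta$, is the sanity check that the estimate is sharp and the whole argument is forced; carrying it out uniformly in $T$ is the delicate step, after which upper semicontinuity closes the induction.
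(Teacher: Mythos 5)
Your setup matches the paper's proof: the reduction to the single step $d\to d+1$, the dimension count $\dim\II(d')=\dim(T)+h^0\oo(d')-\delta-1$, the translation of dominance into the generic-fibre dimension being exactly $\dim(T)-\delta$, and the specialization to the reducible curve $C'=C\cup\ell$ are all exactly the paper's moves. But the statement you then reduce everything to --- the bound on the \emph{whole} fibre, $\dim\{Z\in T: Z\subset C\cup\ell\}\leq \dim(T)-\delta$ --- is not just ``the delicate step'' you leave open: it is false in general. Take $T$ to be the family of collinear subschemes of degree $\delta=m\geq 3$ (the full stratum with dHB matrix $(1\ \ m)$, coming from the resolution $0\to\oo(-m-1)\to\oo(-1)\oplus\oo(-m)\to\ii_Z\to 0$), so $\dim(T)=m+2$ and $\dim(T)-\delta=2$, and take any $d\geq m$; the hypotheses of the theorem hold, yet the fibre of $q(d+1)$ over $C\cup\ell$ contains the entire $\pp^m$ of degree-$m$ subschemes of the line $\ell$ itself, of dimension $m>2$. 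In particular no stratification by $k=\deg\bigl(Z\setminus(Z\cap C)\bigr)$ can rescue your inequality: the stratum of members of $T$ lying entirely on $\ell$ already violates it. Your sanity check with $\delta$ general points is misleading precisely because for special $T$ the added line can absorb whole members of the family.

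The paper's proof avoids this by never bounding the whole fibre over $C'$: it controls the fibre only \emph{locally at the point} $(C',Z)$. Since $\ell$ is general, $\ell\cap Z=\emptyset$, so every $Z'$ in the fibre over $C'$ that is close to $Z$ is disjoint from $\ell$ and hence contained in $C$; thus near $(C',Z)$ the fibre of $q(d+1)$ over $C'$ coincides with the fibre of $q(d)$ over $C$, which has dimension $\dim(T)-\delta$ because $(C,Z)$ is general in $\II(d)$ and $q(d)$ dominates. This exhibits one \emph{component} of one fibre of dimension exactly $\dim(T)-\delta$, and that suffices: if $q(d+1)$ were not dominant, every component of every nonempty fibre would have dimension at least $\dim\II(d+1)-\dim\overline{q(d+1)(\II(d+1))}>\dim(T)-\delta$, by the fibre-dimension result the paper quotes from \cite{AG}, 3.22.b --- a contradiction. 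So your argument can be repaired, and then becomes the paper's argument, by replacing your global fibre bound with the local one: Chevalley semicontinuity of $x\mapsto\dim_x q^{-1}(q(x))$ lives on the source, so only the local dimension at $(C',Z)$ needs to be small, and that is what the genericity of $\ell$ (missing $Z$) actually gives you.
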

\begin{proof}
For the sake of completeness, we sketch here the argument, 
that is contained in the proof of \cite{ChFa}, Lemma 3.2.

It is clearly sufficient to prove the theorem for $d'=d+1$.

Notice that any component $F_{d+1}$ of a general fiber  of $q(d+1)$ has dimension:
$$\dim F_{d+1}\geq  \dim\II(d+1) -\dim(\pp(H^0O(d+1))) = \dim(T)-Hf(d+1)=\dim(T)-\delta$$
and the inequality is strict, if  $q(d+1)$ does not dominate. 

Since, by assumption, $q(d)$ dominates, the same computation shows
that the dimension of a general fiber
of $q(d)$ is $\dim(T)-\delta$. We simply want to know that the dimension
does not change passing from $d$ to $d+1$.

Let $(C,Z)$ be a general point in $\II(d)$. Then, the fiber of $q(d)$
over $C$ has dimension $\dim(T)-\delta$, in a neighbourhood of $(C,Z)$.
Consider now a general line $L$ and put $C'=C\cup L$. The pair
$(C',Z)$ sits in $\II(d+1)$ and, in particular, it sits in the
fiber of $q(d+1)$ over $C'$. Moreover, in a neighbourhood 
of $(C',Z)$, all pairs in the fiber of $q(d+1)$ over $C'$ are
of type $(C',Z')$ with $Z'\subset C$. It follows that the fiber
of $q(d+1)$ over $C'$ has at least one component of dimension
$\dim(T)-\delta$. Since by \cite{AG}, 3.22.b page 95, the dimension
of components of fibers can only increase under specialization,
it follows that a general fiber of $q(d+1)$ has at least
one component of dimension $\dim(T)-\delta$. This proves that
$q(d+1)$ dominates.
\end{proof}

The previous result, which indeed has validity far beyond
our application to points in $\pp^2$, implies that 
in order to show that general curves of any high degree $d'$
contain a subscheme with fixed dHB matrix, it is sufficient to prove
the claim for some $d\leq d'$ for which the Hilbert function
achieves the degree.

\begin{rem} \label{getdeg}
Consider a zero-dimensional subscheme $Z\subset\pp^2$, whose
ideal sheaf has a free resolution:
$$ 0\to \oplus^{n-1} \oo(-b_j)\stackrel{A}\longrightarrow\oplus^n \oo(-a_i)\to \ii_Z\to 0$$
where, as usual, we put $b_1\geq \dots \geq b_{n-1}$ and $a_1\geq\dots\geq a_n$.
Then for all $d\geq b_1-2$, the Hilbert function of $Z$ at level $d$ coincides
with $\deg(Z)$.
See \cite{CGO}, \S 0, (3), for a proof of this fact.
 \end{rem}

\section{The main result}

Now we are ready to state and prove our main result on
the determinantal representation of plane curves.
\smallskip

\begin{thm}\label{main} {\bf (main)} Let $M$ be a homogeneous $n\times n$  
matrix of integers, of degree $d$. Assume $M$ is well-ordered.

Then a general form of degree $d$ in $\cc[x,y,z]$ is the determinant
of a matrix of forms $N$, whose degree matrix is $M$, if and only if
the two conditions hold:

1) $m_{ii}\geq 0$ for all $i$;

2) Whenever for  some $k=2,\dots n$, the element 
 $m_{k\ k-1}$ is negative, then the submatrix $M'$ of $M$ obtained by 
 erasing the first $k-1$ rows and columns,  has degree $0$ or $d$.
\end{thm}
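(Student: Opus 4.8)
The plan is to prove necessity directly and sufficiency by induction on the size $n$. Necessity is immediate: condition (1) is forced by Example \ref{exdiag}, since a negative diagonal entry makes the determinant vanish identically, and condition (2) is forced by Example \ref{exsubdiag}, since a negative subdiagonal entry makes the matrix block-triangular and hence the determinant reducible, unless the lower-right block $M'$ has degree $0$ or $d$. So the real content of the theorem is the sufficiency of (1) and (2).

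For sufficiency I would first dispose of the degenerate configurations permitted by condition (2). If some $m_{k\,k-1}<0$ with $M'$ of degree $0$ or $d$, then every competing matrix of forms is forced to be block-triangular, and one of its two diagonal blocks is generically a unit (its determinant is a nonzero constant); the problem therefore collapses to a strictly smaller determinantal problem with non-negative subdiagonal, which is exactly the situation isolated in Remark \ref{rem1}. Thus I may assume from now on the \emph{core case}, in which $m_{ii}\geq 0$ and $m_{k\,k-1}\geq 0$ for all $k$, with $d>0$.

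The engine of the induction is Proposition \ref{matrixpoints}: to represent a general form of degree $d$ as a determinant with degree matrix $M$, it suffices to exhibit, on a general curve of degree $d$, a zero-dimensional subscheme $Z$ whose degree Hilbert-Burch matrix $R$ is obtained by deleting one well-chosen row of $M$. I would then relate $Z$ to a smaller scheme by one elementary liaison step following Gaeta: a basic double link that peels off an extremal generator, replacing the $(n-1)\times n$ matrix $R$ by an $(n-2)\times(n-1)$ degree Hilbert-Burch matrix $R'$, which is in turn the deletion of a row from an $(n-1)\times(n-1)$ homogeneous matrix $\tilde M$. The combinatorial heart of the step is to choose the deleted row and the linking form so that $\tilde M$ is again well-ordered and still satisfies (1) and (2); the inductive hypothesis, with base case $n=2$ supplied by Theorem \ref{2x2}, then produces the linked scheme $Z'$ on a general curve.

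To close the loop I would run everything through the incidence variety of Section \ref{inc}. Using the family $T$ of subschemes with the fixed matrix $R$ (respectively $R'$) and the projection $q(d)$, I would show that dominance for $Z'$ propagates to dominance for $Z$ under the liaison correspondence, and then invoke Theorem \ref{split} together with Remark \ref{getdeg} to reduce the verification to the single degree at which the Hilbert function of $Z$ first reaches its degree; this frees me from controlling all intermediate degrees by hand. The main obstacle, and where the real work lies, is the bookkeeping of the inductive step: guaranteeing that a row can always be deleted so that the liaison stays inside the class of matrices satisfying (1) and (2), that the required linking form exists on the general curve, and that the dimension count of fibers is preserved, so that dominance is genuinely transferred rather than merely left unobstructed.
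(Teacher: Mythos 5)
Your treatment of necessity and of the degenerate configurations is fine and matches the paper (Examples \ref{exdiag} and \ref{exsubdiag}, Remark \ref{rem1}), and you have correctly assembled the right toolkit: Proposition \ref{matrixpoints}, the incidence variety of Section \ref{inc}, Theorem \ref{split} with Remark \ref{getdeg}, and the base case Theorem \ref{2x2}. But the inductive engine of your proof is never built, and the two claims you defer as ``bookkeeping'' are precisely the mathematical content of the theorem. First, the combinatorial claim that one can always peel off an extremal generator by a basic double link so that the resulting smaller square matrix $\tilde M$ is again well-ordered and satisfies conditions (1) and (2) is nowhere verified; controlling exactly this is what the paper's Lemma \ref{zero}, Proposition \ref{inductive}, and the explicit descent in the final proof are for, and it is not routine. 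Second, and more seriously, your claim that ``dominance for $Z'$ propagates to dominance for $Z$ under the liaison correspondence'' is not a formal consequence of anything available: the only transfer result in the paper is Theorem \ref{split}, which keeps the stratum $T$ \emph{fixed} and increases the degree of the curve. Transferring dominance across a basic double link $I_Z=\ell\, I_{Z'}+(f)$ amounts to showing that a \emph{general} form $F$ of degree $d$ can be written as $F=\ell g+cf$ with $g$ in the ideal of some member $Z'$ of the smaller stratum; this is a dominance statement of essentially the same depth as the theorem itself (it is the kind of result proved in \cite{CaChGe} for complete intersections, i.e.\ exactly the $2\times 2$ case), so as written your induction either is circular at this point or rests on an unproved lemma of comparable difficulty.

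The paper's proof is designed precisely to avoid this obstacle: no genuine liaison is ever performed on the schemes. To put a scheme $Z$ with dHB matrix $M'$ (the first row of $M$ deleted) on a general curve of degree $d$, the paper uses Proposition \ref{inductive} plus Theorem \ref{split} to reduce to curves of degree $d-1$, which by Proposition \ref{matrixpoints} is the determinantal problem for the matrix $\bar M$ obtained from $M$ by subtracting $1$ from each entry of the first row; after reordering rows, the pair (maximum of the first column, number of entries attaining it) strictly drops lexicographically, so after finitely many such degree descents a zero appears in the first column, at which point Lemma \ref{zero} reduces the size $n$ by a trivial bordering with a unit entry---the underlying scheme never changes, so the only transfer of dominance ever needed is the one supplied by Theorem \ref{split}. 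To salvage your plan you would have to prove the liaison-transfer lemma directly (a dominance count over tuples $(Z',\ell,f,g,c)$, where the real difficulty would reappear), or else replace the basic double link step by the paper's degree-descent mechanism; you would also need to treat the case $m_{kk}=0$ separately (the paper's Remark \ref{rem2}), since your ``core case'' as stated allows zero diagonal entries, for which the deleted-row matrix need not define a nonempty scheme.
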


We will make induction on the size $n$ of $M$.

With a series of remarks, we reduce ourselves to proving
only the existence of the matrix $N$, when $m_{k\ k-1}\geq 0$ for all $k=2,\dots,n$.

\begin{rem} Conditions 1) and 2) of the theorem are 
necessary, as explained in Examples \ref{exdiag} and \ref{exsubdiag}.
\end{rem}

\begin{rem} The theorem is trivial, when $n=1$. When $n=2$, the theorem
is an easy consequence of  \cite{CaChGe}, as explained in
Theorem \ref{2x2}. Namely, notice that the conditions 1) and 2) of the main theorem
reduce to conditions of Theorem \ref{2x2}, when $n=2$.
\end{rem}

Let us see what happens when  $m_{k\ k-1}<0$ and conditions 1) and 2) of the 
main theorem hold.

\begin{rem}\label{rem1} With the notation of the theorem, assume
$m_{k\ k-1}<0$. Then also $m_{ij}<0$ for $i\geq k$ and $j\leq k-1$.
Hence the matrix of forms $N=(n_{ij})$ necessarily has $n_{ij}=0$
for $i\geq k$ and $j\leq k-1$. Thus if $N'$ is the matrix
formed by the first $k-1$ rows and columns of $N$, and $N''$ is obtained
from $N$ by erasing these rows and columns, then $det(N)=\det(N')\cdot\det(N'')$.

Call $e$ the degree of $M'$, which is the degree matrix of $N'$.
The degree matrix of $N''$ is homogeneous, of degree $d-e$.
By condition 2) of the main theorem, we must have either $e=d$ or $e=0$.
By induction, when $e=d$, for a general choice of $N$, a general  form of
degree $d$ is the determinant of $N'$, while a general constant
is the determinant of $N''$. Thus the theorem holds, in this case.
The case $e=0$ is similar.
\end{rem}

Next, let us see what happens when $m_{kk}=0$ for some $k$. 

\begin{rem}\label{rem2} With the assumptions of the main theorem, assume
$m_{kk}=0$ for some $k$. Then, erasing the $k$-th row and column,
we get a $(n-1)\times(n-1)$ matrix $M'$ which is again 
homogeneous and satisfies the condition of the theorem. Thus, by
induction, a general  form $F$ of degree $d$ is the determinant of a matrix 
$N'$ whose degree matrix is $M'$. Adding to $N'$, in the $k$-th position, 
a row and a column which are zero, except for $n_{kk}=1$, we get a new square
$n\times n$ matrix $N$, whose degree matrix is $M$ and whose
determinant is $F$.
\end{rem}

It follows from the previous remarks, that the theorem is proved
once one shows that for any $n\times n$ 
homogeneous well-ordered matrix $M$ of degree
$d$, with $m_{kk}>0$, $m_{k\ k-1}\geq 0$, for all $k$,
then a general form of degree $d$ in
$\cc[x,y,z]$ is the determinant
of a matrix of forms, whose degree matrix is $M$.
\smallskip

\begin{lem}\label{indu} Assume, by induction, that the main theorem
holds for matrices of size $(n-1)\times(n-1)$.
Fix a homogeneous well-ordered matrix $Q=(q_{ij})$ of size $(n-2)\times(n-1)$ and
call $a_1\geq a_2\geq \dots\geq a_{n-1}$ the degrees of its maximal minors.
Assume $q_{kk}>0$ for all $k$, and fix an integer $d\geq a_1$. 

Then a general form of degree $d$ corresponds to a curve $C$ which contains
a set of points with a dHB matrix equal to $Q$.
\end{lem}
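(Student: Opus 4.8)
The plan is to deduce the statement from the inductive hypothesis (the main theorem for $(n-1)\times(n-1)$ matrices) together with Proposition \ref{matrixpoints}. The idea is to complete the $(n-2)\times(n-1)$ matrix $Q$ to a \emph{square} $(n-1)\times(n-1)$ homogeneous matrix $\tilde M$ of degree $d$, in such a way that erasing the newly added row recovers $Q$. If $\tilde M$ can be arranged to satisfy conditions 1) and 2) of Theorem \ref{main}, then by induction a general form of degree $d$ is the determinant of a matrix of forms with degree matrix $\tilde M$, and the converse part of Proposition \ref{matrixpoints} will produce, on the corresponding general curve, a zero-dimensional subscheme $Z$ whose dHB matrix is exactly $Q$. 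Note that with this route the incidence-variety machinery of Theorem \ref{split} is not needed, since the completion works verbatim for every $d\geq a_1$.

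Concretely, let $b_1\geq\dots\geq b_{n-2}$ be the syzygy degrees attached to $Q$, so that $q_{ij}=b_i-a_j$; comparing the maximal minor that erases column $j$ (a form of degree $a_j$) with its expression as a $(n-2)\times(n-2)$ determinant gives the identity $\sum_i b_i=\sum_j a_j$. I would then adjoin to $Q$ a single new row, corresponding to a syzygy of degree $b_0=d$, whose entries are $d-a_1,\dots,d-a_{n-1}$, all non-negative since $d\geq a_1\geq a_j$. Reordering the rows so that the syzygy degrees $b_0,b_1,\dots,b_{n-2}$ are non-increasing yields a well-ordered matrix $\tilde M$, and computing a diagonal sum and using $\sum_i b_i=\sum_j a_j$ shows that $\deg\det\tilde M=b_0=d$, as required.

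The heart of the argument is to check that $\tilde M$ satisfies the hypotheses of the main theorem. Condition 1) is immediate: the diagonal entries inherited from $Q$ are either $q_{kk}>0$ or of the form $q_{k-1\,k}\geq q_{k-1\,k-1}>0$, while the diagonal entry falling in the new row equals $d-a_p\geq 0$, where $p$ is the insertion position. For condition 2) I would verify the stronger fact that $\tilde M$ has \emph{no} negative subdiagonal entry whatsoever; this is exactly where $d\geq a_1$ enters. For a row $k<p$ the syzygy degree is $b_k\geq b_{p-1}\geq d$, so the subdiagonal entry is $b_k-a_{k-1}\geq d-a_{k-1}\geq 0$; the subdiagonal entry sitting in the new row is $d-a_{p-1}\geq 0$, the one immediately below it is $q_{pp}>0$, and the remaining subdiagonal entries are diagonal entries $q_{k-1\,k-1}>0$ of $Q$. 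Hence condition 2) is vacuously satisfied.

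With conditions 1) and 2) established, the inductive hypothesis yields a general form of degree $d$ as the determinant of a matrix of forms $N$ with degree matrix $\tilde M$. Erasing the added row recovers $R=Q$, and since $q_{kk}>0$ for all $k$ we have $r_{kk}\geq 0$ and $\max\{r_{kk}\}>0$; thus the converse part of Proposition \ref{matrixpoints} shows that the general curve $\{N=0\}$ of degree $d$ contains an arithmetically Cohen--Macaulay (hence zero-dimensional) subscheme $Z$ with dHB matrix $Q$, which is the assertion. I expect the main obstacle to be precisely the subdiagonal verification above: a priori, completing $Q$ by one extra syzygy could create a negative subdiagonal entry, throwing us into the delicate case 2) of the main theorem, and it is the hypothesis $d\geq a_1$ that prevents this by forcing the non-negative entries $d-a_j$ of the new row to lie below every subdiagonal position that could otherwise turn negative.
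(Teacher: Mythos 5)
Your proof is correct and takes essentially the same route as the paper's: both complete $Q$ to a square $(n-1)\times(n-1)$ homogeneous matrix of degree $d$ by adjoining the row $(d-a_1,\dots,d-a_{n-1})$, reorder, check that conditions 1) and 2) of Theorem \ref{main} hold (your bookkeeping by insertion position replaces the paper's column-wise case analysis of the subdiagonal entries, but proves the same thing), and then conclude by the inductive hypothesis together with Proposition \ref{matrixpoints}. If anything, your verification is slightly more careful than the paper's, which in its second case asserts $d-a_k>0$ where only $d-a_k\geq 0$ is guaranteed (e.g.\ when $d=a_1$), a harmless imprecision since condition 2) only concerns strictly negative subdiagonal entries.
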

\begin{proof}
Add to $Q$ the row $(d-a_1,\dots , d-a_{n-1})$ and reorder.
We get a $(n-1)\times(n-1)$ homogeneous matrix $M=(m_{ij})$.
We have the following possibilities for $M$.

\begin{enumerate}
\item if $d-a_k\geq q_{kk}$ then $m_{k+1\ k}=q_{kk}>0$.

\item if $d-a_k\leq q_{k+1\ k}$ then $m_{k+1\ k}=q_{k+1\ k}$.
But since $d-a_k>0$, we also get $m_{k+1\ k}>0$.

\item if $q_{k+1\ k}<d-a_k< q_{kk}$ then $m_{k+1\ k}=d-a_k>0$.
\end{enumerate} 
 Then by induction, a general form of degree $d$ is the determinant
 of a matrix of forms, whose degree matrix is $M$. The claim follows. 
\end{proof}

\begin{lem}\label{zero}
Assume the main theorem holds for matrices of size $(n-1)\times(n-1)$.
Take a homogeneous $n\times n$ matrix $M$ as in the theorem and
assume $m_{i1}=0$ for some $i$.
 Then a general form of degree
$d$ is the determinant of a matrix of forms, whose degree matrix is $M$. 
\end{lem}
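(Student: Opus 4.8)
The plan is to shrink the matrix by one and call on the inductive hypothesis, using the constant that the hypothesis $m_{i1}=0$ forces into the first column. By the reductions in Remarks \ref{rem1} and \ref{rem2} we may assume $m_{kk}>0$ and $m_{k\,k-1}\ge 0$ for all $k$; since $m_{11}>0=m_{i1}$, necessarily $i\ge 2$. First I would set $M^{*}$ to be the $(n-1)\times(n-1)$ matrix obtained from $M$ by deleting row $i$ and column $1$, and record the cheap structural facts: $M^{*}$ is again homogeneous and well-ordered (both properties are inherited by any submatrix), and it still has degree $d$. For the degree, pick any permutation $\tau$ of $\{1,\dots,n\}$ with $\tau(i)=1$; then $d=\sum_j m_{j\,\tau(j)}=m_{i1}+\sum_{j\ne i}m_{j\,\tau(j)}=\sum_{j\ne i}m_{j\,\tau(j)}$, and the last sum is exactly a full permutation sum of $M^{*}$, hence equals $\deg M^{*}$.

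The main point, and the step that needs care, is to check that $M^{*}$ satisfies conditions 1) and 2) of Theorem \ref{main}, so that the inductive hypothesis applies. Writing $(M^{*})_{p,q}=m_{\rho(p),\,q+1}$ with $\rho(p)=p$ for $p<i$ and $\rho(p)=p+1$ for $p\ge i$, a direct computation gives that the diagonal entry $(M^{*})_{p,p}$ equals $m_{p,p+1}\ge m_{p,p}\ge 0$ when $p<i$ and equals $m_{p+1,p+1}\ge 0$ when $p\ge i$, so condition 1) holds. Likewise the subdiagonal entry $(M^{*})_{p,p-1}$ equals $m_{p,p}\ge 0$ when $p<i$ and equals the subdiagonal entry $m_{p+1,p}\ge 0$ of $M$ when $p\ge i$; thus $M^{*}$ has non-negative subdiagonal and condition 2) is vacuous. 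This is precisely where I use the reduction to $m_{k\,k-1}\ge 0$: without it one would have to carry the degree-$0$-or-$d$ alternative of condition 2) through the re-indexing.

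With these checks in hand, the inductive hypothesis (Theorem \ref{main} in size $n-1$) provides, for a general form $F$ of degree $d$, a matrix $N^{*}$ of forms with degree matrix $M^{*}$ and $\det N^{*}=F$. Finally I would reassemble $N$: put $N^{*}$ in the $(n-1)\times(n-1)$ block indexed by rows $\ne i$ and columns $\ne 1$, set the first column equal to $\pm e_{i}$ with the sign chosen so that the Laplace cofactor of $(i,1)$ contributes with a $+$, and fill the remaining slots of row $i$ with the zero form. Since $m_{i1}=0$ the unit entry is legitimate, the zero entries are legitimate in any degree, and the resulting degree matrix of $N$ is exactly $M$; expanding $\det N$ along the first column leaves only the $(i,1)$ cofactor, so $\det N=\pm\det N^{*}=F$. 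I expect the only genuinely delicate point to be the re-indexing bookkeeping verifying conditions 1) and 2) for $M^{*}$, while the reassembly and sign adjustment are routine, and producing one such $N$ for each general $F$ is all that existence requires.
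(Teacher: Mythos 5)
Your proof is correct and is essentially the paper's own argument: delete row $i$ and the first column, check that the resulting $(n-1)\times(n-1)$ matrix is homogeneous of degree $d$, well-ordered, and satisfies conditions 1) and 2) of the main theorem, invoke the inductive hypothesis, and then border the resulting matrix of forms with a unit in position $(i,1)$ and zeros elsewhere, so that expansion along the first column recovers $F$. The only (immaterial) difference lies in how condition 2) is handled: you first invoke Remarks \ref{rem1} and \ref{rem2} to reduce to $m_{k\,k-1}\ge 0$, which makes condition 2) vacuous for your $M^{*}$, whereas the paper's proof keeps the general hypotheses and deals with a possibly negative subdiagonal entry $q_{k+1\,k}=m_{k+2\,k+1}$ directly, observing that the submatrix of $Q$ obtained by erasing its first $k$ rows and columns coincides with the submatrix of $M$ obtained by erasing its first $k+1$ rows and columns, so that condition 2) for $Q$ is inherited from condition 2) for $M$.
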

\begin{proof}
Let $Q=(q_{ij})$ be the matrix obtained by erasing the $i$-th row and the first
column of $M$. $Q$ is a homogeneous $(n-1)\times(n-1)$ matrix of degree $d$.
Either $q_{kk}=m_{k\ k+1}$, for $k<i$, or $q_{kk}=m_{k+1\ k+1}$.
In any event $q_{kk}\geq 0$. If $k<i-1$ then $q_{k+1\ k}=m_{k+1\ k+1}\geq 0$. 
If $k\geq i-1$ then $q_{k+1\ k}=m_{k+2\ k+1}$, and the matrix obtained by
erasing the first $k$ rows and columns of $Q$ coincides with the matrix
obtained from $M$ by erasing the first $k+1$ rows and columns.
Thus $Q$ satisfies the assumptions of the main theorem, and by induction 
we know that a general form $F$ of degree $d$ is the determinant of a matrix of
forms $N$, whose degree matrix is $Q$.

Now, adding to $N$ a $i$-th row of type $(1\ 0\ 0\ \dots 0)$ and a 
first column of type $(0\ 0 \ \dots 0\ 1\ 0 \dots 0)$, with $1$ in the $i$-th place,
we get a matrix of forms, whose determinant is $F$ and whose 
degree matrix is $M$.
\end{proof}

\begin{prop}\label{inductive} 
Let $M$ be a  $n\times n$ homogeneous, well ordered matrix of integers,
of degree $d$.
Let $M'$ be the matrix obtained from $M$
by erasing the first row. Assume there exists a $0$-dimensional set $Z$  with a dHB 
matrix equal to  $M'$. Then the Hilbert function of $Z$ in degree $d-1$
coincides with the degree of $Z$. 
\end{prop}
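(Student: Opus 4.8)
The plan is to reduce the statement to the numerical threshold recorded in Remark \ref{getdeg}: if $Z$ has a free resolution whose top syzygy degree is $b_1 = \max_j b_j$, then the Hilbert function of $Z$ agrees with $\deg(Z)$ at every level $t \geq b_1 - 2$. Since the target level is $t = d-1$, it suffices to establish the single inequality $b_1 \leq d+1$; in fact I expect the stronger bound $b_1 \leq d$, which is more than enough.

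To produce this bound I would translate everything into the degree matrix. Write $M' = (m'_{ij})$ for the $(n-1)\times n$ dHB matrix of $Z$ (the rows $2,\dots,n$ of $M$), let $a_1 \geq \dots \geq a_n$ be the degrees of its maximal minors (the generator degrees of $\ii_Z$) and $b_1 \geq \dots \geq b_{n-1}$ the syzygy degrees, so that $m'_{ij} = b_i - a_j$. Two identities then drive the argument. First, the defining relation of the dHB matrix gives $b_1 = a_1 + m'_{11} = a_1 + m_{21}$, and the well-ordering of $M$ guarantees that $b_1$ really is the largest syzygy degree, since $m_{21} \geq m_{31} \geq \dots \geq m_{n1}$. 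Second, I would show $d = a_1 + m_{11}$: the minor of $M'$ omitting the first column is exactly the $(n-1)\times(n-1)$ submatrix of $M$ supported on rows and columns $2,\dots,n$, so by homogeneity its degree $a_1$ equals the diagonal sum $m_{22} + \dots + m_{nn}$, whence $a_1 + m_{11} = m_{11} + m_{22} + \dots + m_{nn} = d$, the degree of $M$ read off its main diagonal.

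Subtracting the two identities yields $b_1 = d - (m_{11} - m_{21})$. Because $M$ is well-ordered we have $m_{11} \geq m_{21}$, and therefore $b_1 \leq d$. Consequently $d - 1 \geq d - 2 \geq b_1 - 2$, so Remark \ref{getdeg} applies at level $d-1$ and gives that the Hilbert function of $Z$ in degree $d-1$ equals $\deg(Z)$, as claimed.

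The conceptual input is entirely carried by Remark \ref{getdeg}, so no independent regularity estimate is required; the argument is really just degree bookkeeping. The only points demanding a little care, rather than any genuine obstacle, are the two combinatorial identifications: recognizing the relevant maximal minor of $M'$ as a diagonal sum of $M$ (so that $a_1 = d - m_{11}$), and confirming that the largest syzygy degree $b_1$ is the one attached to the first row of $M'$. Both follow immediately from the well-ordering convention, which is why I expect the proof to be short.
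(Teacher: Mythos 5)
Your proof is correct and follows essentially the same route as the paper's: both reduce the claim via Remark \ref{getdeg} to the inequality $d-1 \geq b_1-2$, and both establish it through the identities $d = m_{11} + a_1$ and $b_1 = m_{21} + a_1$ together with the well-ordering inequality $m_{11} \geq m_{21}$. Your write-up merely makes explicit a couple of bookkeeping points (that $b_1$ is attached to the first row of $M'$ and $a_1$ to the minor omitting its first column) that the paper leaves implicit.
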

\begin{proof} It is enough to consider the resolution of the ideal sheaf
$\ii$ of $Z$. We get:
$$
 0\to \oplus ^{n-1} O(-b_i) \to \oplus^n O(-a_i) \to \ii\to 0,
$$
where the $a_i$'s are the degrees of the maximal minors of $M'$.
After ordering the $a_i$'s and the $b_j$'s so that 
$a_1\geq \dots \geq a_n$ and $b_1\geq \dots \geq b_{n-1}$, 
we simply need to prove that $d>b_1-2$. But we have:
\[
\begin{array}{rcl}
d & =  & m_{11}+ \deg 
\begin{pmatrix} m_{22} &\dots & m_{2n} \\
\vdots & & \vdots \\ m_{n2}& \dots & m_{nn}\end{pmatrix}
 = m_{11}+a_1 \\
b_1 & = & m_{21} + a_1,
\end{array}
\]

and since $m_{11}\geq m_{21}$, the conclusion follows.
\end{proof}

Now we are ready for the proof of the main theorem.
\smallskip

{\bf proof of the main theorem}
If $m_{i1}=0$ for some $i$, then we are done, by
Lemma \ref{zero}. So, we just need to show, by induction, that
we can always reduce to this case.

Recall that $m=m_{11}>0$ is the maximum of the first column;
call $x$ the number of entries in the first column, for which the 
maximum is attained. 

We need to prove that a general curve of degree $d$ contains
a subscheme $Z$ whose dHB matrix is the matrix $M'$ obtained
by erasing the first row of $M$. By Proposition \ref{inductive},
together with Theorem \ref{split}, applied to the stratum $T$
of sets of points with fixed dHB matrix,
 it is enough to prove that a general
curve of degree $d-1$ contains a scheme like $Z$.
But this amounts to saying that a general form of degree $d-1$
is the determinant of a matrix whose degree matrix is 
$$ \bar M = \begin{pmatrix} m_{11}-1 & m_{12}-1 &\dots & \dots & m_{1n}-1 \\
 & & M' & & \end{pmatrix}
$$ 
Notice that, although $\bar M$ is possibly unordered, either
the maximum of the first column of $\bar M$
is smaller than $m$, or the number of entries for which the maximum
is attained is smaller than $x$.

Then, we reorder $\bar M$ (just reordering the rows is enough),
and repeat the procedure. It is clear that, after a finite number of steps,
we end up with a matrix having a zero in the first column, to
which we may apply Lemma \ref{zero}.

The claim follows.\qed

\section{Subschemes and linear systems on a general plane curve}\label{pts}

In this section, we discuss an application of the previous result. Namely,
using the connection between determinantal representation of 
forms and $0$-dimensional subschemes of general curves,
outlined in section \ref{detsub}, we see that we are able to classify
{\it all} the dHB matrices of subsets of points that one
can find on  plane curves of degree $d$.
\smallskip

The procedure goes as follows.

\begin{prob} Fix a well-ordered 
dHB matrix $Q=(q_{ij})$, i.e. a homogeneous $(n-1)\times n$
matrix of integers, such that $q_{ii}\geq 0$ for all $i$
and $\max\{q_{ii}\}>0$. Fix a degree $d$.

Does a general plane curve of degree $d$ contain a
$0$-dimensional subset, with a dHB matrix equal to $Q$?  
\end{prob}

{\bf Solution.}
Let $a_1,\dots, a_n$ be the degrees of the maximal minors
of $Q$. Consider the row $(d-a_1,\dots, d-a_n)$.
Add the row to $Q$ and reorder, so that the resulting
square matrix $M=(m_{ij})$ is well-ordered.

The answer to the problem is positive if and only if
$M$ satisfies conditions 1) and 2) of the main theorem,
namely $m_{ii}\geq 0$ for all $i$ and
$m_{i\ i-1}<0$ implies that the matrix obtained by erasing 
the first $i-1$ rows and columns of $M$ has either degree $0$
or degree $d$.
\medskip

Let us try to give a direct  characterization
of which schemes one finds on a general plane curve
of degree $d$.

Write, as usual, the  resolution
of the ideal sheaf $\ii$ of a scheme $Z$ as:
$$
0\to \oplus^{n-1}\oo(-b_j)\to \oplus^n\oo(-a_i)\to\ii\to 0
$$
with $b_1\geq\dots\geq b_{n-1}$ and $a_1\geq\dots\geq a_n$
and consider the dHB matrix $Q=(q_{ij})$, $q_{ij}=b_i-a_j$.
We must have $q_{ii}\geq 0$ for all $i$.

For simplicity, we will assume, as we can always do, 
that the resolution is {\it minimal}.
This implies that $b_1>a_i$ and $b_{n-1}>a_n$, i.e.
$q_{11}>0$ and $q_{n-1\ n}>0$ (see section 0 of \cite{CGO}).

\begin{cor} \label{cor of main}
With the previous notation, one has:

\begin{itemize}
\item[(i)] If $d\geq b_1$, then a general curve of degree $d$
contains a scheme with dHB matrix equal to $Q$.

\item[(ii)] Assume $d< b_{n-1}$. Then a general curve of degree $d$
contains a scheme with dHB matrix equal to $Q$ if and only if
either $d=a_n$ or $d\geq a_{n-1}$.

\item[(iii)] Assume there is  $i$ such that $b_{i-1}>d\geq b_i$.
 Then a general curve of degree $d$
contains a scheme with dHB matrix equal to $Q$ if and only if
$q_{k\ k-1}\geq 0$ for $k=2,\dots,i-1$ and $d\geq a_{i-1}$.

\end{itemize}
\end{cor}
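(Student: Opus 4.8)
The plan is to deduce all three parts from Theorem \ref{main} by means of the procedure stated just before the corollary: one appends the row $(d-a_1,\dots,d-a_n)$ to $Q$ and reorders to obtain a well-ordered square matrix $M=(m_{ij})$, and then a general curve of degree $d$ carries a scheme with dHB matrix $Q$ if and only if $M$ satisfies conditions 1) and 2) of Theorem \ref{main}. The first step I would carry out is to make the shape of $M$ explicit: its rows carry the integers $b_1,\dots,b_{n-1}$ together with the new value $d$, listed in non-increasing order, its columns carry $a_1\ge\dots\ge a_n$, and $m_{kl}=\beta_k-a_l$, where $\beta_k$ denotes the $k$-th largest element of $\{b_1,\dots,b_{n-1}\}\cup\{d\}$. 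The three parts of the corollary are then nothing but the three possible positions of $d$ among the $b_j$: it sits on top when $d\ge b_1$ (part (i)), in the interior slot $i$ when $b_{i-1}>d\ge b_i$ (part (iii)), and at the very bottom when $d<b_{n-1}$ (part (ii)). With $M$ described this way, reading off each diagonal entry $m_{kk}$ and each subdiagonal entry $m_{k\,k-1}$ in terms of the $q_{ij}$ is routine bookkeeping using the monotonicity of the $a$'s and $b$'s.

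The key structural point I would establish next is that minimality of the resolution, which gives $q_{11}>0$ and $q_{n-1\,n}>0$, collapses condition 2) to a simple statement. Suppose condition 1) holds (otherwise the answer is already negative) and some subdiagonal entry $m_{k\,k-1}<0$. Then, exactly as in Example \ref{exsubdiag} and Remark \ref{rem1}, the ordering forces $M$ to split and $\det N=\det N'\cdot\det N''$, with $N'$ the top-left $(k-1)\times(k-1)$ block; condition 2) requires the bottom-right block to have degree $0$ or $d$, equivalently one of the two blocks to have degree $0$. Now the top-left block has degree $\sum_{l<k}m_{ll}\ge m_{11}\ge q_{11}>0$, while the bottom-right block has degree $\sum_{l\ge k}m_{ll}\ge m_{nn}$, and in parts (i) and (iii) the bottom-right corner is $m_{nn}=q_{n-1\,n}>0$. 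Since the two degrees sum to $d$, each then lies strictly between $0$ and $d$, so condition 2) fails and the answer is negative. Hence, in parts (i) and (iii), a general curve carries such a scheme if and only if condition 1) holds and the entire subdiagonal of $M$ is non-negative. Translating through the bookkeeping of the first step, part (i) makes both requirements automatic (from $q_{ii}\ge 0$ and monotonicity), so the answer is always positive; in part (iii) the only entries that can violate them are $q_{k\,k-1}$ for $k\le i-1$ and the newly created $m_{i\,i-1}=d-a_{i-1}$, and non-negativity of these is precisely ``$q_{k\,k-1}\ge 0$ for $k=2,\dots,i-1$ and $d\ge a_{i-1}$'' (the diagonal condition $d\ge a_i$ then being automatic).

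Part (ii) is the delicate one and is where I expect the real work to lie. When $d$ drops below $b_{n-1}$ it occupies the bottom row, and the positive corner $q_{n-1\,n}$ is displaced off the main diagonal of $M$, so that $m_{nn}=d-a_n$ is no longer forced to be positive; this is exactly why the collapse argument of the previous paragraph must be refined here. Now condition 1) genuinely imposes $d\ge a_n$. The bottom subdiagonal entry is $m_{n\,n-1}=d-a_{n-1}$: if $d\ge a_{n-1}$ it is non-negative and one argues as before on the interior subdiagonal; if instead $a_n\le d<a_{n-1}$ it is negative, the induced splitting peels off the $1\times 1$ block $(d-a_n)$, and condition 2) forces $d-a_n\in\{0,d\}$, i.e. $d=a_n$ (the alternative $a_n=0$ is excluded for a genuine nonempty scheme, and the resulting zero corner is handled by the reduction of Remark \ref{rem2}). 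This produces exactly the dichotomy ``$d=a_n$ or $d\ge a_{n-1}$'' of the statement. The main obstacle, and the place that demands the most care, is that in this bottom position \emph{all} of the interior subdiagonal entries $q_{k\,k-1}$ of $Q$ survive into $M$, so the argument must track their non-negativity just as explicitly as in part (iii); pinning down their role in the equivalence, together with the degenerate zero-diagonal reductions and the boundary values where $d$ meets some $b_j$, is the crux of the proof of part (ii).
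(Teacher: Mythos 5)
Your handling of parts (i) and (iii) is correct and is essentially the paper's own proof: append the row $(d-a_1,\dots,d-a_n)$, reorder, and test conditions 1) and 2) of Theorem \ref{main}; your ``collapse'' step --- once condition 1) holds and both corner blocks have positive degree, any negative subdiagonal entry of $M$ violates condition 2) --- is the same block-degree computation the paper carries out in case (iii) (top block of degree $\geq q_{11}>0$, bottom block of degree $\geq q_{n-1\,n}>0$), and your identification of which entries of $M$ come from the diagonal of $Q$, the subdiagonal of $Q$, and the new row agrees with the paper's.

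For part (ii), however, your proof is not finished: you explicitly defer (``the crux'') the treatment of the interior subdiagonal entries $q_{k\,k-1}$, $k=2,\dots,n-1$, which all survive into $M$ when the new row lands at the bottom. This is a genuine gap in your write-up, but your instinct about where the difficulty lies is exactly right, and in fact the gap cannot be closed as the statement stands: part (ii) is false when some $q_{k\,k-1}<0$. The paper's proof deals with this point by asserting that in case (ii) the subdiagonal entries of $Q$ ``are non-negative''; that assertion is unjustified (nothing in the standing hypotheses $q_{ii}\geq 0$, $q_{11}>0$, $q_{n-1\,n}>0$ and minimality implies it) and false in general --- in contrast with case (iii), where exactly these entries are allowed to be negative and are built into the criterion. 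A concrete counterexample: let $Z$ be five collinear points together with one general point; its minimal resolution has generator degrees $a=(5,2,2)$ and syzygy degrees $b=(6,3)$, so
$$Q=\begin{pmatrix} 1 & 4 & 4\\ -2 & 1 & 1 \end{pmatrix},$$
which satisfies all the standing hypotheses but has $q_{21}=-2<0$. Take $d=2$: then $d<b_{n-1}=3$ and $d=a_n$, so (ii) predicts a positive answer; but adding the row $(-3,0,0)$ and reordering gives $m_{21}=-2<0$ with lower-right block of degree $1\neq 0,d$, so condition 2) of Theorem \ref{main} fails --- and indeed every conic containing such a scheme is divisible by the line through the five collinear points, hence no irreducible (in particular no general) conic contains one. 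So the statement of (ii) needs the additional hypothesis $q_{k\,k-1}\geq 0$ for $k=2,\dots,n-1$, parallel to (iii); once that is added, the outline you give (condition 1) forces $d\geq a_n$; if $d-a_{n-1}<0$ the split-off $1\times 1$ block $(d-a_n)$ forces $d=a_n$) does complete to a proof along the paper's lines.
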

\begin{proof}
In case (i), the new row $(d-a_1\dots d-a_n)$ goes at the top, 
in the reordering, so that in the subdiagonal of 
the new square matrix $M=(m_{ij})$ one
has  the elements of the diagonal of $Q$,
which are non-negative.

In case (ii), the new row $(d-a_1\dots d-a_n)$ goes at the bottom, 
so that in the subdiagonal of the new square matrix $M=(m_{ij})$, one
has the elements of the subdiagonal of $Q$,
which are non-negative, and $d-a_{n-1}$. If this last is non-negative,
we are done. Otherwise condition 2) of the main theorem requires
that $m_{nn}=d-a_n$ is $0$.

In case (iii), the new row $(d-a_1\dots d-a_n)$ goes in the $i$-th
position. In the square matrix $M=(m_{ij})$, the elements
of the subdiagonal are $q_{21},\dots, q_{i-1\  i-2}, d-a_{i-1},
q_{i i},\dots, q_{n-1\ n-1}$. Since $q_{ii},\dots, q_{n-1\ n-1}$
are non-negative, we focus our attention on the other elements 
of the subdiagonal.

If $d-a_{i-1}<0$, condition 2) of the
main theorem implies that either $q_{11}+\dots+q_{i-1\ i-1}=0$,
which is impossible since they are non-negative and $q_{11}>0$,
or $(d-a_i)+q_{i\ i+1}+\dots+ q_{n-1\ n}=0$.
As $d-a_i\geq b_i-a_i\geq 0$, and $q_{i\ i+1},\dots, q_{n-1\ n}\geq 0$,
 this last equality implies that $q_{n-1\ n}=0$, which is impossible
 when the resolution is minimal.
 
Similarly, if  $q_{k\ k-1}<0$ for some $k=1\dots i-1$,   
condition 2) of the main theorem implies that either 
$q_{11}+\dots+q_{k-1\ k-1}=0$,
which is impossible since they are non-negative and $q_{11}>0$,
or $q_{kk}+ \dots + q_{i-1\ i-1}+(d-a_i)+q_{i\ i+1}+\dots+ q_{n-1\ n}=0$.
As $d-a_i\geq b_i-a_i\geq 0$, and the other summands are non-negative,
 this last equality implies that $q_{n-1\ n}=0$, impossible
 when the resolution is minimal.
 \end{proof}
 
The previous result yields  the following
``asymptotic" principle:

 \begin{cor}
 For any choice of a (possible)
dHB matrix $M$ of a set of points, and for $d\gg 0$, a general
curve of degree $d$ contains  subschemes whose dHB matrix is $M$. 
 
 \end{cor}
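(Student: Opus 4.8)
The plan is to derive this asymptotic corollary directly from the previous Corollary \ref{cor of main}, since the hard work of characterizing exactly which degrees $d$ admit a subscheme with dHB matrix $Q$ has already been done there. First I would fix an admissible dHB matrix $Q=(q_{ij})$ of a set of points: by hypothesis this means $Q$ is a homogeneous, well-ordered $(n-1)\times n$ matrix with $q_{ii}\geq 0$ for all $i$ and $\max\{q_{ii}\}>0$. As in the setup preceding Corollary \ref{cor of main}, I may assume the resolution is minimal, so that $q_{11}>0$ and $q_{n-1\,n}>0$, and I have the invariants $b_1\geq\dots\geq b_{n-1}$ and $a_1\geq\dots\geq a_n$ with $q_{ij}=b_i-a_j$.

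The key observation is that the statement is about the regime $d\gg 0$, so I only need the large-$d$ branch of Corollary \ref{cor of main}. Concretely, I would simply take $d\geq b_1$ and invoke case (i) of that corollary, which asserts with no further conditions that a general curve of degree $d$ contains a scheme with dHB matrix equal to $Q$. This is exactly the content of the asymptotic statement: since $b_1$ is a fixed integer determined by $Q$, the inequality $d\geq b_1$ holds for all sufficiently large $d$, which is the meaning of $d\gg 0$.

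The proof is therefore essentially a one-line application of the preceding result, so there is no serious obstacle; the only thing to be careful about is bookkeeping. I should note explicitly why case (i) applies unconditionally: in the reordering that forms the square matrix $M$, the new row $(d-a_1,\dots,d-a_n)$ moves to the top when $d\geq b_1$, so the subdiagonal of $M$ consists precisely of the diagonal entries $q_{ii}$ of $Q$, all of which are non-negative by assumption. Thus condition 1) of the main theorem ($m_{ii}\geq 0$) and condition 2) (vacuously, since no subdiagonal entry is negative) are both satisfied, and the main theorem guarantees the existence of the desired subscheme. Taking any $d\geq b_1$ completes the argument, and one may even record the explicit threshold $d\geq b_1$ rather than the weaker $d\gg 0$.
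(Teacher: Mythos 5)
Your proof is correct and follows exactly the route the paper intends: the corollary is stated as an immediate consequence of Corollary \ref{cor of main}, and its content is precisely case (i) applied with the explicit threshold $d\geq b_1$, including the observation that the new row goes to the top so the subdiagonal of the square matrix consists of the non-negative diagonal entries of $Q$. Nothing is missing; your version even makes the (implicit) paper argument more explicit by verifying conditions 1) and 2) of the main theorem directly.
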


Let see in some examples how  it works.

\begin{ex}
Consider the following homogeneous matrix
$$ Q= \begin{pmatrix}
2 & 3 & 5 \\ 1 & 2 & 4
\end{pmatrix} $$
corresponding to a $0$-dimensional subscheme $Z$
of degree $22$, whose ideal sheaf $\ii$ has resolution:
$$ 0\to \oo(-9)\oplus\oo(-8) \to \oo(-7)\oplus \oo(-6)\oplus \oo(-4) \to \ii\to 0.$$
Can one find a similar scheme in a general curve of degree $4$?
The answer is positive, as $4=a_n<b_{n-1}$, in the notation of 
Corollary \ref{cor of main}. 
Alternatively, notice that, adding the row $(-3\ -2 \ 0)$
(whose three entries are $4-7$, $4-6$, $4-4$)
and reordering, one ends up with the matrix:
 $$ \begin{pmatrix}
2 & 3 & 5 \\ 1 & 2 & 4 \\ -3 & -2 & 0
\end{pmatrix} $$
which satisfies the assumptions of the main theorem.
Notice that $m_{32}=-2<0$, but the submatrix obtained
by erasing the first $2$ rows and columns, has degree $0$.

Can one find a similar scheme in a general curve of degree $5$?
The answer is negative. Namely, $5<\min\{a_{n-1},b_{n-1}\}$
in the notation of Corollary \ref{cor of main}.
Observe that adding to $Q$ the row $(-2\ -1 \ 1)$
and reordering, one ends up with the matrix:
 $$ \begin{pmatrix}
2 & 3 & 5 \\ 1 & 2 & 4 \\ -2 & -1 & 1
\end{pmatrix} $$
Here $m_{32}<0$, but erasing the first $2$ rows and columns, the remaining
matrix has degree $1\neq 0,d$.

Notice that any quintic curve containing $Z$ 
corresponds to the product of the quartic generator 
and a linear form, hence cannot be irreducible
(of course, this is perfectly consistent with our Theorem \ref{main}).

Similarly, one shows that a general curve of degree $6$ or $7$
contains a subscheme with  dHB matrix equal to $Q$.
Arguing as above, since the Hilbert function of $Z$, at level $7$, is 
equal to the degree 
of $Z$, then one can find a subscheme with  dHB matrix equal to $Q$,
on a general curve of any degree $d\geq 7$.
\end{ex}

\begin{ex}
The previous example also points out a curious consequence.

Consider  the Hilbert scheme of
subsets of degree $22$ in $\pp^2$, and
let $T$ be the subvariety of subsets having a dHB equal to $Q$.
Consider the incidence variety 
$$\II(4) =\{(C,Z): C \mbox{ is a curve of degree $4$ containing }Z\in T \}$$
introduced in section \ref{inc}. 
Using  \cite{strat} Theorem 2.6, one computes that $\dim\II(4)=\dim(T)= 21$,
since every scheme like $Z$ sits in a unique quartic.
The main theorem implies that the natural projection $\II(4)\to \pp(H^0\oo(4))$
is dominant, with general fibers of dimension $7$.

Consider now the incidence variety 
$$\II(5) =\{(C,Z): C \mbox{ is a curve of degree $5$ containing }Z\in T \}$$
introduced in section \ref{inc}. 
One has $h^0(\ii(5))=3$, so $\dim\II(5)=\dim(T)+2= 23$,
which is bigger than $\dim(\pp(H^0\oo(5)))=20$. On the other hand, as we saw in
the example, the projection $\II(5)\to \pp(H^0\oo(5))$ is not dominant.
Indeed, the fibers of these projection have dimension at least $7$.
Notice that the image coincides with the space of quintics splitting 
in a quartic plus a line. This space has dimension $16$, which is exacly $23-7$.

As a consequence, we see that the natural projections
$\II(d)\to \pp(H^0\oo(d))$, introduced in section \ref{inc},
are non-necessarily of maximal rank.
\end{ex}

\begin{ex}
Let $I_1$ be the ideal of $Z_1 = $ 7 general points in $\pp^2$ 
and let $I_2$ be the ideal of  $Z_2 = $13 points on a conic, $Q$. 
 Let $I_Z = I_1 \cap I_2$ be the ideal of $Z = Z_1 \cup Z_2$.  
 Notice that any curve of degree $\leq 6$ necessarily has $Q$ 
 as a component, hence the general curve of degree $6$ 
 does not contain a set of points like $Z$.  

The difference of the Hilbert function of $R/I_Z$ is $\tt (1, 2, 3, 4, 5, 3, 2)$.  
The minimal free resolution of $I_Z$ is
$$
0 \rightarrow \mathcal O(-6) \oplus \mathcal O(-7) \oplus 
\mathcal O(-8)^2 \rightarrow \mathcal O(-5)^3 \oplus 
\mathcal O(-7)^2 \rightarrow \mathcal I_Z \rightarrow 0.
$$
Thus the dHB matrix for $R/I_Z$ is
$$ Q = \begin{pmatrix} 1 & 1 & 3 & 3 & 3 \\
1 & 1 & 3 & 3 & 3 \\
0 & 0 & 2 & 2 & 2 \\
-1 & -1 & 1 & 1 & 1 
\end{pmatrix}$$

Let us follow the procedure of the solution to Problem 5.1, 
asking if a general plane curve of degree 6 contains 
a zero-dimensional scheme with this dHB matrix.  
Since $d=6$, we add the row $(-1,-1,1,1,1)$:
$$  \begin{pmatrix}
1 & 1 & 3 & 3 & 3 \\
1 & 1 & 3 & 3 & 3 \\
0 & 0 & 2 & 2 & 2 \\
-1 & -1 & 1 & 1 & 1 \\
-1 & -1 & 1 & 1 & 1 
\end{pmatrix}$$

This matrix satisfies both conditions in Theorem \ref{main}, 
hence the theorem asserts that a general plane curve of degree $6$
 contains a zero-dimensional subset with a dHB matrix equal to $Q$.  

 However, this does not contradict the observation at 
 the beginning of this example.  Indeed, in the family 
 of zero-dimensional schemes in $\pp^2$ whose difference of
 the Hilbert function is $\tt (1, 2, 3, 4, 5, 3, 2)$ (an irreducible family), 
 the general element, $Y$, has minimal free resolution
$$
0 \rightarrow \mathcal O(-6) \oplus  \mathcal O(-8)^2 \rightarrow 
\mathcal O(-5)^3 \oplus \mathcal O(-7) \rightarrow \mathcal I_Y \rightarrow 0.
$$
The algorithm shows that a general curve of degree $6$ 
contains a set of points with this minimal free resolution, 
so adding a trivial summand $\mathcal O(-7)$ to both free modules gives $Q$.
\end{ex}

Corollary \ref{cor of main} can also be used as a way  to determine
quickly the existence of 
linear series on a general plane curve, with preassigned 
 index of speciality for  sums $D+zH$, $z\in\ZZ$.
Let us give one example.

\begin{ex}
Let $C$ be a general plane curve of degree 8.  
It is easy to compute, using the Brill-Noether theory,
that $C$ has (special) linear series $g^2_{20}$. 
For a divisor $D$ of degree 20, let us furthermore consider 
the following properties ($H$ is a linear divisor):

\begin{itemize}
\item[(A)] $D+H$ is non-special;
\item[(B)] $D-H$ is effective.
\end{itemize}

\noindent We will show that $C$ contains different 
{\em complete} $g_{20}^2$'s, whose divisors $D$ satisfy 
all possible combinations 
of properties (A) and (B).

Let $D$ be a divisor on $C$, which we will also consider as a 
subscheme of $\pp^2$.  Call $I_D$ its homogeneous ideal and $Hf$ its 
Hilbert function.

Then by Riemann-Roch and adjunction, 
$D$ belongs to a complete $g_{20}^2$ on $C$ if and only if $\dim (I_D)_{5} = 3$ 
(i.e. $Hf(5) = 18$).  Furthermore, $D+H$ is non-special 
if and only if $\dim (I_D)_4 = 0$ (i.e. $Hf(4) = 15$), and $D-H$ is effective 
if and only if $\dim (I_D)_6 > 8$ (i.e. $Hf(6) \leq 19$).  

In order for there to be a complete $g_{20}^2$,  
the possible Hilbert functions for $D$ are
\smallskip
$$\begin{matrix} %{tabular}{llllllll}
 (a) & {\tt 1, 3, 6, 10, 15, 18, 20, 20, \dots} \\
 (b) & {\tt 1, 3, 6, 10, 15, 18, 19, 20, \dots} \\
 (c) & {\tt 1, 3, 6, 10, 14, 18, 20, 20, \dots} \\
 (d) & {\tt 1, 3, 6, 10, 14, 18, 19, 20, \dots} 
 \end{matrix}
$$ \smallskip
 
 Then one checks with our methods 
  that all four kinds of $g_{20}^2$'s exist on $C$, 
  and one sees immediately from the above that (a) has 
  property (A) only, (b) has properties (A) and (B), 
  (c) has neither property, and (d) has property (B) only.
\end{ex}

\end{document}